\documentclass[11pt]{article}
\usepackage{amsmath,amssymb,amsthm}
\usepackage{url}
\usepackage{hyperref}
\usepackage{graphicx}

\usepackage{fullpage}
\usepackage{enumerate}

\pagestyle{plain}

\sloppy

\newtheorem{theorem}{Theorem}
\newtheorem{lemma}{Lemma}

\newcommand{\Floor}[1]{\left\lfloor#1\right\rfloor}
\newcommand{\Ceil}[1]{\left\lceil#1\right\rceil}

\title{Enumeration of Payphone Permutations}
\author{Max A. Alekseyev\thanks{The George Washington University, Washington, DC, USA. Email: \href{mailto:maxal@gwu.edu}{maxal@gwu.edu} }}

\begin{document}
\maketitle\thispagestyle{plain}

\begin{abstract}
The desire for privacy significantly impacts various aspects of social behavior as illustrated by people's tendency to seek out the most secluded spot when multiple options are available.
In particular, this can be 
seen at rows of payphones, where people tend to occupy an available payphone that is most distant from those already occupied.
Assuming that there are $n$ payphones in a row and that $n$ people occupy payphones one after another as privately as possible, the resulting assignment of people to payphones defines a permutation, which we will refer to as a \emph{payphone permutation}.
In the present study, we consider different variations of payphone permutations and enumerate them.
\end{abstract}

\section{Introduction}

Social habits often serve as a source of inspiration for exploring novel mathematical objects and properties.
One prominent example is the development of \emph{parking functions}, which originated from a simple observation that people typically have preferences about where to park their cars, and then evolved into a rich combinatorial theory with many applications in other areas of mathematics and computer science~\cite{Yan2015}.

The desire for privacy represents yet another intriguing facet of social behavior, as illustrated by people's tendency to seek out the most secluded spot when multiple options are available. We note some similarities to the concept of \emph{social distancing}, a widely adopted practice to minimize health risks during the COVID-19 pandemic, which generally requires people to maintain some minimum physical separation distance from each other. When it comes to privacy, there is no mandated lower bound for the physical distance between people, but it is deliberately maximized by each personal choice.
In particular, this can be seen at a row of payphones (Figure~\ref{fig:payphones}), where people tend to occupy an available payphone that is most distant from those already occupied.
A similar trend can be observed at a row of urinals in a male restroom, and it was even stated as one of the rules in the \emph{Male Restroom Etiquette} mockumentary by Phil Rice~\cite{Rice2006wiki}.

\begin{figure}[!ht]
\centering \includegraphics[width=0.75\textwidth]{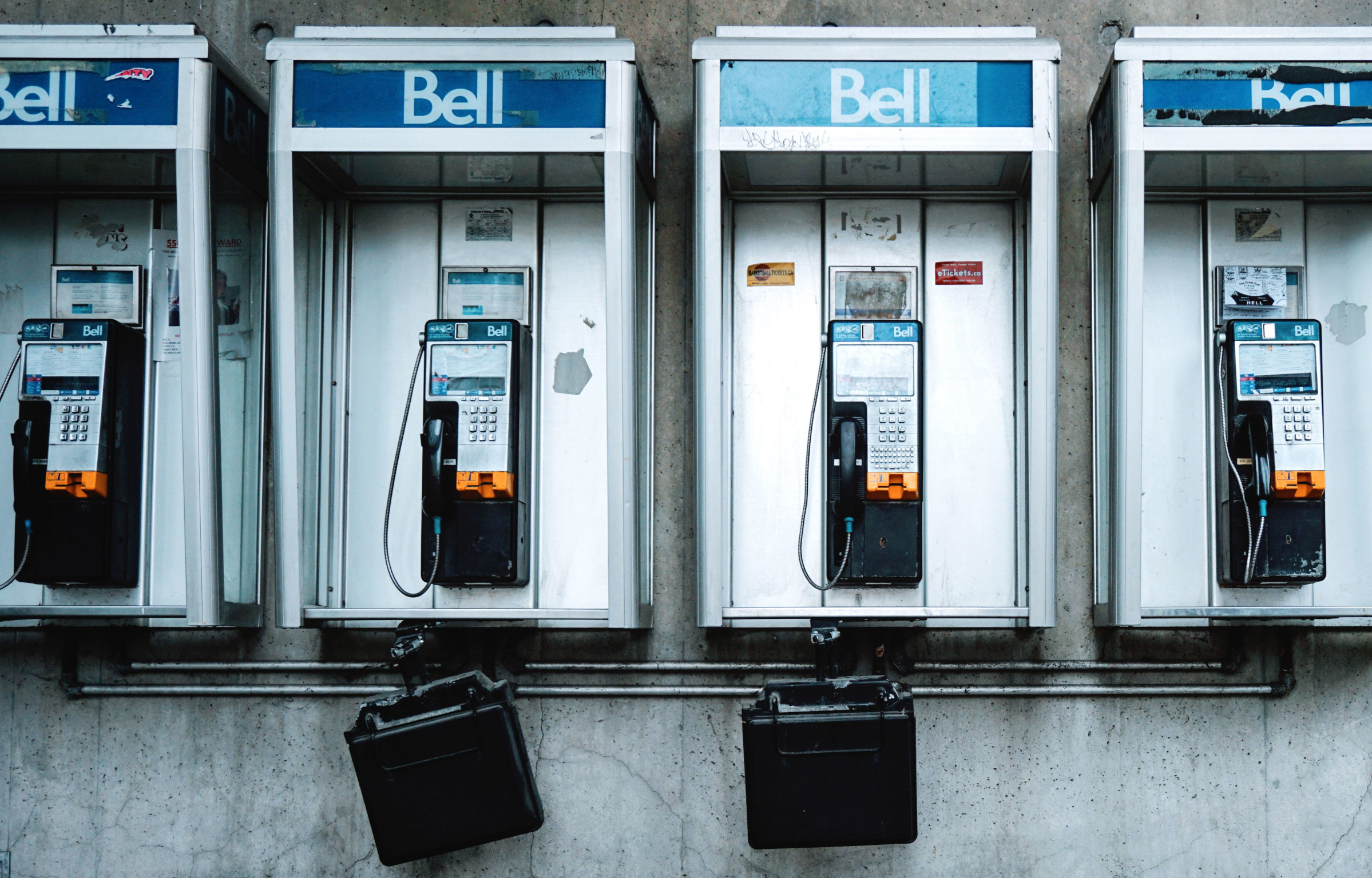}
\caption{A row of payphones. (pxfuel.com)}
\label{fig:payphones}
\end{figure}

Assuming that there are $n$ payphones in a row and that $n$ people occupy payphones one after another as privately as possible, the resulting assignment of people to payphones defines a permutation, 
which we will refer to as a \emph{payphone permutation}.

For example, when $n=6$ we have six people numbered from $1$ to $6$ and a row of six payphones:
\begin{center}
\begin{tabular}{lcc}
people & ~~~~~~ & payphones\\
\\
6, 5, 4, 3, 2, 1 & ~~~$\longrightarrow$~~~ & 
\begin{tabular}{|c|c|c|c|c|c|}
\hline
\phantom{3} & \phantom{5} & \phantom{1} & \phantom{4} & \phantom{6} & \phantom{2} \\
\hline
\end{tabular}\,.
\end{tabular}
\end{center}

\noindent
The first person does not have any preference and takes any available payphone, let it be the third one:
\begin{center}
\begin{tabular}{lcc}
6, 5, 4, 3, 2\phantom{, 1} & ~~~$\longrightarrow$~~~ & 
\begin{tabular}{|c|c|c|c|c|c|}
\hline
\phantom{3} & \phantom{5} & 1 & \phantom{4} & \phantom{6} & \phantom{2} \\
\hline
\end{tabular}\,.
\end{tabular}
\end{center}

\noindent
Then the second person takes the most distant (last) payphone from the occupied one:
\begin{center}
\begin{tabular}{lcc}
6, 5, 4, 3\phantom{, 2, 1} & ~~~$\longrightarrow$~~~ & 
\begin{tabular}{|c|c|c|c|c|c|}
\hline
\phantom{3} & \phantom{5} & 1 & \phantom{4} & \phantom{6} & 2 \\
\hline
\end{tabular}\,.
\end{tabular}
\end{center}

\noindent
Now, every available payphone except the first one is adjacent to an occupied one, and thus the third person takes the first payphone:
\begin{center}
\begin{tabular}{lcc}
6, 5, 4\phantom{, 3, 2, 1} & ~~~$\longrightarrow$~~~ & 
\begin{tabular}{|c|c|c|c|c|c|}
\hline
3 & \phantom{5} & 1 & \phantom{4} & \phantom{6} & 2 \\
\hline
\end{tabular}\,.
\end{tabular}
\end{center}

\noindent
The fourth person cannot avoid taking a payphone adjacent to an occupied one, but the second payphone is sandwiched between two occupied ones,
while the fourth and fifth payphones each have only one adjacent occupied payphone. Hence, they may be more preferable, and let's assume 
the fourth person takes the fourth payphone:
\begin{center}
\begin{tabular}{lcc}
6, 5\phantom{, 4, 3, 2, 1} & ~~~$\longrightarrow$~~~ & 
\begin{tabular}{|c|c|c|c|c|c|}
\hline
3 & \phantom{5} & 1 & 4 & \phantom{6} & 2 \\
\hline
\end{tabular}\,.
\end{tabular}
\end{center}

\noindent
The fifth person has two equally uncomfortable options, each sandwiched between two occupied payphones, and can take any one of them. Let it be the second payphone:
\begin{center}
\begin{tabular}{lcc}
6\phantom{, 5, 4, 3, 2, 1} & ~~~$\longrightarrow$~~~ & 
\begin{tabular}{|c|c|c|c|c|c|}
\hline
3 & 5 & 1 & 4 & \phantom{6} & 2 \\
\hline
\end{tabular}\,.
\end{tabular}
\end{center}

\noindent
Finally, the sixth person takes the last available (fifth) payphone:
\begin{center}
\begin{tabular}{lcc}
\phantom{6, 5, 4, 3, 2, 1} & ~~~\phantom{$\longrightarrow$}~~~ & 
\begin{tabular}{|c|c|c|c|c|c|}
\hline
3 & 5 & 1 & 4 & 6 & 2 \\
\hline
\end{tabular}\,.
\end{tabular}
\end{center}
The resulting assignment of people to payphones defines a payphone permutation $(3,5,1,4,6,2)$.
It can be seen that not all permutations can be obtained this way. 
In the present study, we consider different types of payphone permutations and enumerate them.

\section{Basic Types of Payphone Permutations}

Consider a row of payphones, some of which may be occupied.
A maximal sequence of available consecutive payphones is called an \emph{interval}. An interval is called \emph{semi-closed} or \emph{closed} if it is flanked by $1$ or $2$ occupied payphones, respectively.
The \emph{length} of an interval equals the number of payphones in it. 
We find it convenient to view a semi-closed interval $I$ of length $\ell$ as just a visible ``half'' of an artificial closed interval of length $2\ell-1$,
which we refer to as the \emph{full length} of $I$. For a closed interval, the full length is identical to the length. 
A closed interval of length $1$ is composed of a single payphone, which we will call \emph{sandwiched} between two occupied payphones.

We will distinguish the following types of payphone permutations, in which the first person can take any payphone but the follow-up people obey one of the rules:
\begin{description}
\item[(P1)] A person takes any payphone such that its closest occupied payphone is as distant as possible.
\item[(P2)] A person first selects a longest interval with respect to the full length and then takes a phone within it as in (P1). 
\end{description}

Rule (P1) concerns only the closest occupied payphone and requires it to be as far away as possible. 
This rule implies that the closed interval of length $2k+1$ (in which a new person would be at distance $k+1$ from each of the flanking occupied payphones) 
is always preferred over the closed interval of length $2k$ (in which a new person would be at distances $k$ and $k+1$ from the flanking occupied payphones).
However, this rule does not give any preference for the latter interval over the closed interval of length $2k-1$ (in which a new person would be at distance $k$ from each of the flanking occupied payphones).

Rule (P2) additionally enforces that length $2k$ is preferred over length $2k-1$, which can be interpreted as maximizing the \emph{average distance} to the flanking occupied payphones,
since the corresponding average distances for the two lengths are $k+\frac{1}{2}$ and $k$, respectively.

We note that the permutation $(3,5,1,4,6,2)$ discussed in the previous section is a payphone permutation under each of the rules (P1) and (P2).
However, a permutation $(3,4,1,5,6,2)$, where the person $4$ takes a sandwiched payphone in the presence of a closed interval of length $2$, is a payphone permutation only under rule (P1).

Rules (P1) and (P2) also have circular variants, referred to as (C1) and (C2), in which the payphones form a circle rather than a row.
In these cases, no semi-closed intervals can appear, making their analysis a bit easier.

In Section~\ref{sec:other}, we will consider additional types of payphone permutations that alter preferences for payphones adjacent to an occupied one in some popular ways.

\section{Evolution of Intervals}

Suppose that we have a closed interval of length $n$ and that $n$ people come and take payphones according to rule (P1).
Let $S(n)$ be the multiset of the lengths of the nonempty intervals that ever appear in this process.
Since the first person breaks the initial interval into two of lengths $\Floor{\frac{n-1}2}$ and $\Ceil{\frac{n-1}2}$, we get a recurrence formula:
$$
S(n) = \begin{cases}
\emptyset, & \text{if}\ n=0;\\
\{n\} \cup S(\Floor{\frac{n-1}2}) \cup S(\Ceil{\frac{n-1}2}), & \text{if}\ n\geq 1.
\end{cases}
$$
For example:
\[
\begin{split}
S(1) &= \{ 1 \},\\
S(2) &= \{ 1, 2 \},\\
S(3) &= \{ 1^2, 3\},\\
S(4) &= \{ 1^2, 2, 4\}.
\end{split}
\]

In the next section, we will see that multisets $S(n)$ play a crucial role in the enumeration of payphone permutations. However, the recurrence formula is not convenient for computation.
Therefore, we start by deriving an explicit formula for the multiplicity of $k$ in $S(n)$, denoted $\#_k S(n)$.

First, we notice that $\#_k S(n)$ equals the number of walks from $k$ to $n$ in the directed (multi)tree $T$, 
where nodes are labeled with the positive integers, node $2$ has an incoming edge from node $1$, and node $m\geq 3$ has two incoming edges (possibly parallel): 
from nodes $\Floor{\frac{m-1}2}$ and $\Ceil{\frac{m-1}2}$ (Figure~\ref{fig:treeT}).

\begin{figure}[!tb]
\centering \includegraphics[width=\textwidth]{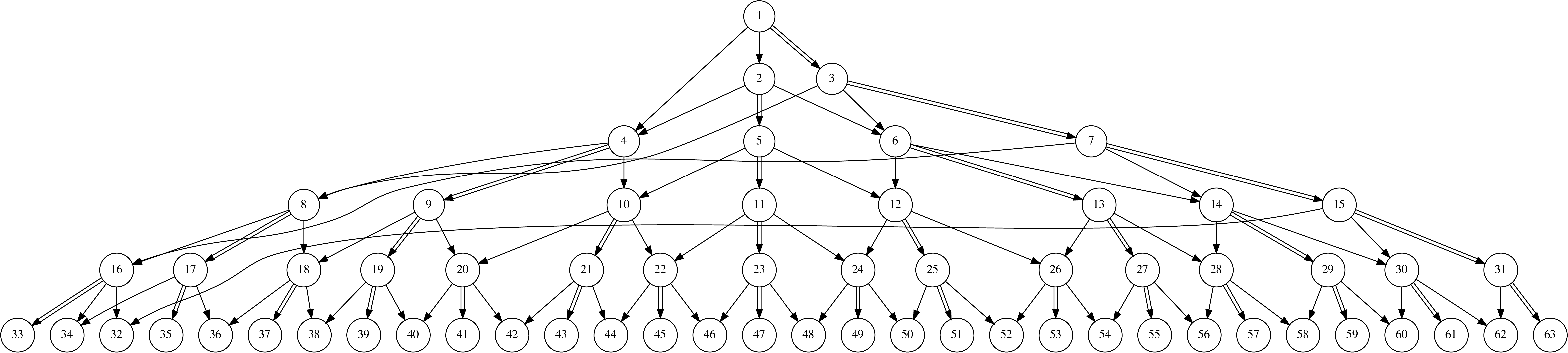}
\caption{The top part of the tree $T$ composed of nodes with labels from $1$ to $63$.}
\label{fig:treeT}
\end{figure}

\begin{lemma}\label{lem:Snk1} For any positive integers $n,k$,
$$\#_k S(n) = \sum_{\ell=\Ceil{\log_2\frac{n+2}{k+2}}}^{\Floor{\log_2\frac{n}{k}}} 2^\ell - |n+1 - (k+1)2^{\ell}|.$$
\end{lemma}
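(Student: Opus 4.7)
The strategy is to count walks from $k$ to $n$ in $T$ of each length $\ell \geq 0$ separately and then sum over $\ell$, since, as noted in the paragraph preceding the lemma, $\#_k S(n)$ equals the total number of such walks.

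The preparatory step is to reformulate the edges of $T$ from the outgoing side: for every $k \geq 1$, the edges leaving $k$ go to $2k$ with multiplicity $1$, to $2k+1$ with multiplicity $2$, and to $2k+2$ with multiplicity $1$. This is immediate from the incoming rules, since $\Floor{\frac{m-1}2}=k$ or $\Ceil{\frac{m-1}2}=k$ forces $m\in\{2k,2k+1,2k+2\}$, with multiplicities recording which of the two operations produce $k$ (the exceptional node $m=2$ is consistent for $k=1$). Equivalently, each outgoing edge from $k$ is uniquely encoded by a pair $(\delta_1,\delta_2)\in\{0,1\}^2$ with target $2k+\delta_1+\delta_2$. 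Consequently, a walk of length $\ell$ out of $k$ corresponds bijectively to a matrix $(\delta_{i,j})\in\{0,1\}^{\ell\times 2}$ and terminates at
\[
2^\ell k + \sum_{i=1}^{\ell} 2^{\ell-i}(\delta_{i,1}+\delta_{i,2}) \;=\; 2^\ell k + a + b,
\]
where $a=\sum_i 2^{\ell-i}\delta_{i,1}$ and $b=\sum_i 2^{\ell-i}\delta_{i,2}$ range independently over $\{0,1,\ldots,2^\ell-1\}$.

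Hence the number of length-$\ell$ walks from $k$ to $n$ equals the number of pairs $(a,b)\in\{0,\ldots,2^\ell-1\}^2$ with $a+b=n-2^\ell k$, and a direct count of such pairs gives $\max\bigl(0,\,2^\ell-|n+1-(k+1)2^\ell|\bigr)$. This quantity is positive precisely when $|n+1-(k+1)2^\ell|<2^\ell$, which is equivalent to $k\cdot 2^\ell\leq n$ together with $(k+2)\cdot 2^\ell\geq n+2$, or, after taking $\log_2$, to $\Ceil{\log_2\frac{n+2}{k+2}}\leq\ell\leq\Floor{\log_2\frac{n}{k}}$. For $\ell$ outside this range the putative summand is non-positive, so dropping it does not affect the total.

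The main obstacle I expect is the bookkeeping of the edge multiplicities in $T$ (particularly around the exceptional node $m=2$) and the careful translation of the inequality $|n+1-(k+1)2^\ell|<2^\ell$ into the logarithmic bounds; once the two-bit encoding of a single step is in place, the triangular pair count and the final summation are routine.
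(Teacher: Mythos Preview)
Your proof is correct and follows essentially the same approach as the paper: both reformulate the outgoing edges of $T$ as ``append a digit $0$, $1$, or $2$'' with the appropriate multiplicities, reduce the walk count of length $\ell$ to the number of pairs $(a,b)\in\{0,\dots,2^\ell-1\}^2$ with $a+b=n-k\cdot 2^\ell$, and then read off the logarithmic bounds on $\ell$. The only cosmetic difference is that the paper passes through an intermediate quasi-binary representation $q\in\{0,1,2\}^\ell$ (weighting each $q$ by $2^{u(q)}$) before establishing the bijection with bit-pairs, whereas you encode each step directly by $(\delta_1,\delta_2)\in\{0,1\}^2$ and arrive at the pair count immediately.
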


\begin{proof}
We find it convenient to work with a base-2 number system with digits from $Q:=\{0,1,2\}$, which we refer to as the \emph{quasi-binary number system}.
We will view elements of $Q^\ell$ as $\ell$-digit quasi-binary representations, and for $q=(q_1,\dots,q_\ell)\in Q^\ell$ define its numerical value
as $v(q):=\sum_{i=1}^\ell q_i 2^{\ell-i}$.

It is easy to see that every integer $k\geq 1$ in $T$ has four outgoing edges: one to $2k$, two to $2k+1$, and one to $2k+2$.
The numbers $2k$, $2k+1$, and $2k+2$ can be obtained from the binary representation of $k$ by appending a digit $0$, $1$, and $2$, respectively.
It follows that the number of walks from $k$ to $n$ of length $\ell$ can be calculated from the representations of $n$ as $n = k\cdot 2^{\ell} + v(q)$ with $q\in Q^\ell$.
Namely, each such representation corresponds to $2^{u(q)}$ walks, where $u(q)$ denotes the number of digits $1$ in $q$. That is,
$$\#_k S(n) = \sum_{\ell\geq 0}~~ \sum_{q\in Q^\ell:\ k\cdot 2^{\ell} + v(q) = n} 2^{u(q)}.$$

We further notice that for fixed $\ell$ and $m$, $\sum_{q\in Q^\ell:\ v(q)=m} 2^{u(q)}$ equals the number of pairs $(a,b)$ of $\ell$-bit nonnegative numbers such that $a+b=m$. 
Indeed, from each such pair, we can construct $q\in Q^\ell$ by performing carry-free bitwise addition of the binary representations of $a$ and $b$. Clearly, we have $a+b=v(q)=m$.
Vice versa, such pairs $(a,b)$ can be reconstructed from $q$ digit-wise as follows: if the $i$-th digit of $q$ is $0$, then the $i$-th bit in both $a$ and $b$ is $0$;
if the $i$-th digit of $q$ is $2$, then the $i$-th bit in both $a$ and $b$ is $1$; and if the $i$-th digit of $q$ is $1$, then the $i$-th bits in $a$ and $b$ are $0$ and $1$, or $1$ and $0$.
Since only in the last case we have two variations, the total number of such pairs reconstructed from $q$ equals $2^{u(q)}$, proving the formula.

Now let's take $m := n - k\cdot 2^{\ell}$.
From the constructed correspondence, it follows that $\sum_{q\in Q^\ell:\ v(q)=m} 2^{u(q)}$ equals the number of integers $a$ such that $0\leq a<2^{\ell}$ and $0\leq m-a<2^{\ell}$. 
Equivalently, we have $\max\{0,m+1-2^{\ell}\}\leq a < \min\{2^{\ell},m+1\}$ and thus
\[
\begin{split}
\sum_{q\in Q(\ell):\ v(q)=m} 2^{u(q)}
&= \min\{2^{\ell},m+1\} - \max\{0,m+1-2^{\ell}\} \\
&= 2^{\ell} - |m+1-2^{\ell}| \\
&= 2^{\ell} - |n+1-(k+1)2^{\ell}|.
\end{split}
\]
From the definition of $m$, it further follows that $0 \leq n - k\cdot 2^{\ell} \leq 2(2^\ell - 1)$, implying that $\ell$ satisfies the inequality $\frac{n+2}{k+2} \leq 2^\ell \leq \frac{n}{k}$.
\end{proof}

\begin{theorem}\label{th:kSn} 
For any positive integers $n\geq k\geq 2$, we have
$$
\#_k S(n) = 
\begin{cases}
1 + (n\bmod 2^\ell), & \text{if}\ k=\Floor{\frac{n}{2^\ell}}\ \text{for}\ \ell\in\{0,1,\dots,\Floor{\log_2 n}-1\}, \\
2^\ell - 1 - (n\bmod 2^\ell), & \text{if}\ k=\Floor{\frac{n}{2^\ell}}-1\ \text{for}\ \ell\in\{0, 1,\dots,\Floor{\log_2 \frac{n}3}\}, \\
0, & \text{otherwise},
\end{cases}
$$
and
$$
\#_1 S(n) = 
\begin{cases}
1 + (n\bmod 2^\ell), & \text{if}\ n\geq 3\cdot 2^{\ell-1}-1\ \text{for}\ \ell=\Floor{\log_2 n}, \\
2^{\ell-1}, & \text{if}\ n < 3\cdot 2^{\ell-1}-1\ \text{for}\ \ell=\Floor{\log_2 n}.
\end{cases}
$$
\end{theorem}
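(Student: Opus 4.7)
The plan is to evaluate the sum in Lemma~\ref{lem:Snk1} directly by a careful case analysis on which power of $2$ lies in the admissible range. The key preliminary observation is that this range is very narrow: the valid values of $x := 2^\ell$ lie in the interval $[(n+2)/(k+2),\, n/k]$, and the ratio of its endpoints equals $n(k+2)/(k(n+2))$. For $k \geq 2$ this ratio is strictly less than $2$ (the inequality reduces to $n(k-2) + 4k > 0$, which is immediate), so the range contains at most one power of $2$ and the sum has at most one nonzero term. For $k = 1$ the ratio simplifies to $3n/(n+2) < 3$, so the sum has at most two nonzero terms.

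For $k \geq 2$, assuming an admissible $\ell$ exists, the constraint $k \cdot 2^\ell \leq n \leq (k+2)2^\ell - 2$ forces $\lfloor n/2^\ell\rfloor \in \{k,\, k+1\}$, giving two subcases. In the first, $\lfloor n/2^\ell\rfloor = k$, so $n+1 \leq (k+1)2^\ell$ and the summand $2^\ell - |n+1 - (k+1)2^\ell|$ collapses to $n + 1 - k\cdot 2^\ell = 1 + (n \bmod 2^\ell)$, matching the first branch. In the second, $\lfloor n/2^\ell\rfloor = k+1$, so $n+1 > (k+1)2^\ell$ and the summand becomes $(k+2)2^\ell - n - 1 = 2^\ell - 1 - (n \bmod 2^\ell)$, matching the second branch. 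The admissible ranges on $\ell$ come from $k \geq 2$: $\lfloor n/2^\ell\rfloor \geq 2$ yields $\ell \leq \lfloor\log_2 n\rfloor - 1$, while $\lfloor n/2^\ell\rfloor \geq 3$ yields $\ell \leq \lfloor\log_2(n/3)\rfloor$. A short check shows the two subcases cannot both hold for the same $k$: combining them would force $k \cdot 2^{\ell_1} < (k+2)2^{\ell_1 - 1}$, i.e., $k < 2$. If neither subcase applies, no $\ell$ is admissible and $\#_k S(n) = 0$.

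For $k = 1$, the interval $[(n+2)/3,\, n]$ always contains $\ell = L := \lfloor\log_2 n\rfloor$, and it contains $\ell = L-1$ precisely when $2^{L-1} \geq (n+2)/3$, equivalently $n < 3\cdot 2^{L-1} - 1$. The summand at $\ell = L$ evaluates to $n + 1 - 2^L = 1 + (n \bmod 2^L)$, giving the first $k = 1$ branch when it is the sole term. When $\ell = L-1$ is also present, its summand is $3\cdot 2^{L-1} - n - 1$, and the two contributions sum to $2^{L-1}$, yielding the second branch.

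The main delicacy lies in the boundary behavior, particularly when $(n \bmod 2^\ell) = 2^\ell - 1$ in the second $k \geq 2$ subcase: the formula evaluates to $0$, and one must verify that $\ell$ then sits on the boundary of the Lemma~\ref{lem:Snk1} range and does not actually contribute, so the piecewise formula returns $0$ rather than a phantom positive value. Apart from these edge cases, the remaining work is routine algebra with absolute values and floors.
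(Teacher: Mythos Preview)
Your approach is correct and essentially the same as the paper's: both bound the ratio of the endpoints of $[(n+2)/(k+2),\,n/k]$ to show at most one (respectively two, when $k=1$) power of $2$ can lie in it, then split on the sign of $n+1-(k+1)2^\ell$ to evaluate the single surviving summand from Lemma~\ref{lem:Snk1}. The only cosmetic difference is that the paper phrases the dichotomy as $2^\ell \gtrless \frac{n+1}{k+1}$ while you phrase it as $\lfloor n/2^\ell\rfloor \in \{k,k+1\}$; your explicit check that the two branches cannot collide for the same $k\geq 2$ is a nice addition that the paper leaves implicit.
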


\begin{proof}
Notice that for $k\geq 2$, there exists at most one power of $2$ in the interval $\big[\frac{n+2}{k+2}, \frac{n}{k}\big]$ since
$$\frac{n}{k}\,\big/\,\frac{n+2}{k+2} < \frac{k+2}{k} \leq 2.$$ 
Suppose that $2^\ell\in\big[\frac{n+2}{k+2}, \frac{n}{k}\big]$ for some integer $\ell\geq 0$. 
Consider two cases depending on whether $2^\ell \geq \frac{n+1}{k+1}$.

If $\frac{n+1}{k+1}\leq 2^\ell \leq \frac{n}{k}$, then
$$\frac{n-(2^\ell-1)}{2^\ell} \leq k \leq \frac{n}{2^\ell}.$$
In this case, all different values of $k\geq 2$ are given by $k = \Floor{\frac{n}{2^\ell}} = \frac{n-(n\bmod 2^\ell)}{2^\ell}$ for $\ell\in\{0,1,\dots,\Floor{\log_2 n}-1\}$, for which we have $\#_k S(n) = 1 + (n\bmod 2^\ell)$ by Lemma~\ref{lem:Snk1}.

If $\frac{n+2}{k+2}\leq 2^\ell < \frac{n+1}{k+1}$, then
$$\frac{n+1-(2^\ell-1)}{2^\ell} \leq k+1 < \frac{n+1}{2^\ell}.$$
This inequality is soluble for $k$ if and only if $2^\ell\nmid n+1$, in which case $k = \Floor{\frac{n+1}{2^\ell}}-1 = \Floor{\frac{n}{2^\ell}}-1 = \frac{n-(n\bmod 2^\ell)}{2^\ell}-1$. 
We have with necessity $\ell>d:=\nu_2(n+1)$ (the $2$-adic valuation of $n+1$) and thus all different values of $k\geq 2$ are delivered by integers $\ell$ satisfying
$d<\ell\leq \Floor{\log_2 \frac{n}3}$, for which we have
$\#_k S(n) = 2^\ell - 1 - (n\bmod 2^\ell)$ by Lemma~\ref{lem:Snk1}.
However, since this formula gives $0$ whenever $2^\ell\mid n+1$, we can drop the requirement $\ell>d$.

When $k=1$, the interval $\big[\frac{n+1}{k+1}, \frac{n}{k}\big]$ contains $2^\ell$ with $\ell := \Floor{\log_2 n}$, 
and there is also the possibility that the interval $\big[\frac{n+2}{k+2}, \frac{n+1}{k+1}\big)$ contains $2^{\ell-1}$.
This happens when $n<3\cdot 2^{\ell-1}-1$, or equivalently $n\bmod 2^\ell < 2^{\ell-1}-1$, which by combining the formulas from the above two cases implies that
$$\#_1 S(n) = 1 + (n\bmod 2^{\ell}) + 2^{\ell-1} - 1 - (n\bmod 2^{\ell-1}) = 2^{\ell-1}.$$
Otherwise, when $n\geq 3\cdot 2^{\ell-1}-1$, the formula is the same as in the first case above, that is, $\#_1 S(n) = 1 + (n\bmod 2^\ell)$.
\end{proof}

\section{Enumeration of Payphone Permutations}

We will show that the enumeration of payphone permutations heavily relies on the multiplicities $\#_k S(n)$, which we can compute efficiently thanks to Theorem~\ref{th:kSn}.
We start with enumerating permutations of type (C2).

\begin{theorem}\label{th:C2}
Let $C_2(n)$ be the number of payphone permutations of type (C2) and size $n$.
Then for any $n\geq 1$,
$$C_2(n) = n\cdot F(S(n-1)),$$
where the function $F$ is defined on the finite multisets of positive integers as follows:
$$F(M) := \bigg(\prod_{k\geq 1} (\#_k M)!\bigg)\cdot 2^{\sum_{k\geq 1} \#_{2k} M}.$$
\end{theorem}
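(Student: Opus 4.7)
The plan is to decompose $C_2(n)$ into three independent factors corresponding to (i) the first person's choice of payphone, (ii) the order in which the intervals that arise during the evolution are filled by their first occupant, and (iii) the position chosen within each even-length interval at the moment of that split; the product will be $n\cdot F(S(n-1))$.

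First, I would observe that on a circle the first person may sit at any of the $n$ payphones, after which the remaining $n-1$ empty payphones form a single closed interval of length $n-1$ (no semi-closed intervals can appear in the circular setting). Distinct initial positions produce distinct labeled permutations, so this contributes a factor of $n$ and reduces the problem to counting evolutions of one closed interval of length $n-1$ under rule~(C2).

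Next, I would analyze the order of splits. At every step, rule~(C2) selects a longest currently available interval, and since on a circle full length coincides with length, this means simply a longest leaf interval. The crucial structural observation is that in the tree $T$ from Lemma~\ref{lem:Snk1}, node $k$ only has outgoing edges to $2k$, $2k+1$, and $2k+2$; consequently a length-$k$ interval can arise only from splitting an interval of length strictly greater than $k$. It follows that once no leaf of length greater than $k$ remains, all $\#_k S(n-1)$ length-$k$ intervals destined to appear are simultaneously present, so rule~(C2) forces each of them to be split before any shorter leaf, and splitting one of them yields only shorter children and does not interfere with the others. Hence these splits may be performed in any of the $(\#_k S(n-1))!$ orders, and since the intervals occupy distinct positions on the circle, distinct orderings yield distinct permutations. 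Multiplying over $k$ gives the second factor $\prod_{k\geq 1}(\#_k S(n-1))!$. I expect this step to be the main obstacle, as one must justify carefully both the ``simultaneous presence'' of all length-$k$ intervals when it is their turn and the realizability of every permutation of them.

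Finally, I would handle the within-interval choice. In a closed interval of length $\ell$, a newcomer at relative position $i\in\{1,\dots,\ell\}$ has minimum distance $\min(i,\ell+1-i)$ to the flanking occupied payphones; this is uniquely maximized at $i=k+1$ when $\ell=2k+1$, and equally maximized at $i\in\{k,k+1\}$ when $\ell=2k$. Hence each even-length split contributes a factor of $2$ and each odd-length split a factor of $1$, giving the third factor $2^{\sum_{k\geq 1}\#_{2k}S(n-1)}$. Multiplying the three factors yields $C_2(n)=n\cdot F(S(n-1))$, as claimed.
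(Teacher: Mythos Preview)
Your proposal is correct and follows essentially the same three-factor decomposition as the paper's proof: the factor $n$ from the first person's choice, the factorials from ordering the splits of equal-length intervals, and the powers of $2$ from the two middle positions in even-length intervals. If anything, you are more careful than the paper in justifying the ``simultaneous presence'' claim via the parent--child structure of the tree $T$, where the paper simply asserts that when an interval of length $k$ is first entered there are exactly $\#_k S(n-1)$ such intervals present.
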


\begin{proof} 
Consider a circular booth of $n$ payphones.
The first person chooses any payphone,
which contributes the factor $n$ to the formula and results in a closed interval of length $n-1$.

According to rule (C2), people come into the longest intervals and break them into shorter ones.
It follows that when a person enters an interval of length $k$ for the first time, the number of such intervals is equal to $\#_k M$.
People have the freedom to choose the order of these intervals, contributing the factor $(\#_k M)!$ to the formula.
For each interval of even length, there is also the freedom to choose one of the two middle payphones, contributing a factor $2$.
The total number of such intervals is $\sum_{k\geq 1} \#_{2k} M$.
\end{proof}

\begin{theorem}\label{th:C1}
Let $C_1(n)$ be the number of payphone permutations of type (C1) and size $n$.
Then for any $n\geq 1$,
$$C_1(n) = n\cdot G(S(n-1)),$$
where the function $G$ is defined on the finite multisets of positive integers as follows:
$$G(M) := \bigg(\prod_{k\geq 1} (\#_{2k} M + \#_{2k-1} M)!\bigg) \cdot 2^{\sum_{k\geq 2} \#_{2k} M}.$$
\end{theorem}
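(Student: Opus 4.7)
The plan is to mirror the proof of Theorem~\ref{th:C2}, adjusting for the weaker preference rule (C1). As before, the factor $n$ arises from the first person's free choice of payphone, after which the remaining $n-1$ people fill a closed interval of length $n-1$ under rule (P1). The key observation is that (P1) partitions intervals into \emph{distance classes}: an interval of length $\ell$ lies in class $k=\lceil \ell/2\rceil$, the distance between the entering person's chosen payphone and the nearest occupied one. Lengths $2k-1$ and $2k$ are therefore merged into class $k$, and (P1) processes classes in strictly decreasing order of $k$, with full freedom to interleave within a class.

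For each $k\geq 2$, I would check that entering a length-$(2k-1)$ or length-$2k$ interval produces sub-intervals of lengths $\lfloor(m-1)/2\rfloor$ and $\lceil(m-1)/2\rceil$, both at most $k$, which lie in strictly smaller distance classes. Hence class $k$ is populated exactly by the intervals of lengths $2k-1$ and $2k$ enumerated in $S(n-1)$, and these may be processed in any order, contributing the factor $(\#_{2k}+\#_{2k-1})!$; additionally, each length-$2k$ entry offers $2$ middle-position choices, contributing the factor $2^{\#_{2k}S(n-1)}$. This is analogous to the (C2) analysis, except that lengths $2k-1$ and $2k$ are now interleaved rather than processed in strict length order.

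The delicate case is $k=1$, which I expect to be the main obstacle: entering a length-2 interval produces a length-1 sub-interval that remains in class 1, so the interval-by-interval bookkeeping of the $k\geq 2$ argument fails. I would resolve this by switching to a position-based viewpoint: after all higher classes are processed, every surviving payphone is adjacent to an occupied one (distance 1) and remains so at each subsequent step of phase 1, so by (P1) \emph{any} ordering of the remaining positions is valid. The count is therefore $(p+2q)!$, where $p$ and $q$ denote the numbers of fresh length-1 and length-2 intervals entering class 1. Using $|S(n-1)|=n-1$ together with the count of entries in classes $k\geq 2$, this equals $(\#_1 S(n-1)+\#_2 S(n-1))!$, matching the $k=1$ factor in $G$; crucially, the $2^{\#_2}$ middle-position choices for the length-2 intervals are automatically absorbed into the ordering of individual positions, which is precisely why $G$ carries no factor of $2$ at $k=1$. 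Multiplying the class-by-class contributions with the initial factor $n$ then gives the claimed formula.
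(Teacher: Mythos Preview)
Your argument is correct. For $k\geq 2$ it coincides with the paper's: both observe that under rule (C1) the intervals of lengths $2k-1$ and $2k$ form a single preference class, that breaking any such interval lands in strictly lower classes, and hence that the $\#_{2k-1}M+\#_{2k}M$ intervals of class $k$ may be ordered freely, with an extra factor of $2$ per even-length interval.

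The treatment of $k=1$ differs. The paper stays with the interval viewpoint: it orders all $\#_1 M+\#_2 M$ intervals of length $1$ or $2$ that ever appear, notes that each length-$2$ interval must precede the length-$1$ interval it spawns, and divides by $2^{\#_2 M}$ for these precedence constraints; this factor then cancels against the $2^{\#_2 M}$ middle-position choices, leaving $(\#_1 M+\#_2 M)!$. You instead switch to a position viewpoint: once every unoccupied payphone is at distance $1$, rule (P1) imposes no constraint at all, so the remaining $p+2q$ positions may be filled in any order, and the identity $p+2q=\#_1 M+\#_2 M$ (which you obtain from $|S(n-1)|=n-1$) gives the same factorial directly. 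Your route is a bit cleaner here, since it avoids introducing and then cancelling the $2^{\#_2 M}$ factor; the paper's route has the advantage of keeping a uniform interval-based bookkeeping across all $k$, which is the form reused when proving Theorem~\ref{th:P5}.
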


\begin{proof}
Rule (C1) gives equal preference to intervals of lengths $2k$ and $2k-1$. For $k>1$, these intervals are broken into intervals shorter than $2k-1$.
Therefore, for $k>1$, the order in which people break these intervals contributes the factor $(\#_{2k} M + \#_{2k-1} M)!$ to the formula.

The case $k=1$ is trickier, since each interval of length $2$ is broken into an interval of length $1$. The latter interval can be selected only after the former one has been selected.
Hence, in the ordered sequence of intervals of lengths $1$ and $2$, each interval of length $2$ must come before the corresponding interval of length $1$, which
reduces the total number of orderings $(\#_{2} M + \#_{1} M)!$ by the factor $2^{\#_2 M}$.
Also, as before, each interval of even length here contributes a factor $2$, which totals the factor $2^{\sum_{k\geq 1} \#_{2k} M}$.
\end{proof}

\begin{theorem}\label{th:P2}
Let $P_2(n)$ be the number of payphone permutations of type (P2) and size $n$.
Then for any $n\geq 1$,
$$P_2(n) = \sum_{i=1}^n F(S'(i-1)\cup S'(n-i)),$$
where
$$
S'(\ell) :=
\begin{cases}
\emptyset, & \text{if}\ \ell=0; \\
S(\ell-1) \cup \{2\ell-1\}, & \text{if}\ \ell\geq 1.
\end{cases}
$$
\end{theorem}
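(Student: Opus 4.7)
The plan is to adapt the enumeration of Theorem~\ref{th:C2} to the linear setting, the new feature being that the first person's choice now splits the row into two \emph{semi-closed} intervals rather than producing a single closed one. Throughout, I would work with full lengths, since rule (P2) compares intervals by this quantity and the definition of $S'$ is phrased in these terms.

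First, I would condition on the position $i\in\{1,\dots,n\}$ chosen freely by the first person, producing the outer sum $\sum_{i=1}^n$ in the formula. This creates a left semi-closed interval of length $i-1$ and a right semi-closed interval of length $n-i$, with full lengths $2(i-1)-1$ and $2(n-i)-1$ when nonempty. Next, I would verify that $S'(\ell)$ is precisely the multiset of full lengths of all intervals that ever arise inside a semi-closed interval of length $\ell\geq 1$: the interval itself contributes the singleton $\{2\ell-1\}$, and once a person is first routed into it by (P2) they are forced to take the unique wall-end payphone, leaving a closed interval of length $\ell-1$ whose subsequent evolution is described by $S(\ell-1)$. Hence the full-length multiset of all intervals arising after the first choice is $M := S'(i-1)\cup S'(n-i)$.

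Then I would count the orderings of the remaining $n-1$ people by mirroring the argument of Theorem~\ref{th:C2}. Intervals of the same full length $k$ may be visited for the first time in any order, contributing the factor $(\#_k M)!$; for each closed interval of even length there is a further binary choice between the two middle payphones, giving the factor $2^{\sum_{k\geq 1}\#_{2k} M}$. Since semi-closed intervals always carry odd full lengths $2\ell-1$, they contribute nothing to this factor, which is consistent with their forced wall-end move. Multiplying and summing over $i$ then yields $\sum_{i=1}^n F(S'(i-1)\cup S'(n-i))$, as claimed.

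The main obstacle I anticipate is the parity bookkeeping around semi-closed intervals: one has to check that replacing a semi-closed interval of length $\ell$ by a ``ghost'' closed interval of full length $2\ell-1$ is faithful to rule (P2) at every step of the process, and that the factor $2^{\sum_{k\geq 1}\#_{2k} M}$ in $F$ correctly picks up the middle-payphone choice for actual closed intervals while vacuously doing nothing for semi-closed ones. Once this identification is set up, the remainder of the proof is a direct linear analog of the circular case already handled.
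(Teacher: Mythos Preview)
Your proposal is correct and follows essentially the same approach as the paper: condition on the first person's position $i$, identify $S'(\ell)$ as the full-length evolution of a semi-closed interval of length $\ell$ (its own full length $2\ell-1$ together with $S(\ell-1)$ once the forced wall-end move is made), and then invoke the counting argument of Theorem~\ref{th:C2}. Your explicit observation that semi-closed intervals always carry \emph{odd} full length---so that the factor $2^{\sum_{k\geq 1}\#_{2k}M}$ in $F$ picks up only genuine closed intervals of even length---is a helpful consistency check that the paper leaves implicit.
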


\begin{proof}
Consider a row of $n$ payphones.
The first person can choose the $i$-th payphone for any $i\in\{1,2,\dots,n\}$, breaking the row into two semi-closed (possibly empty) intervals of length $i-1$ and $n-i$.

Notice that a first person coming to a semi-closed interval $I$ of length $\ell\geq1$ (for $\ell = i-1$ or $n-i$) always takes a payphone at its open end, which we view as
the middle of the corresponding artificial interval of length $2\ell-1$. This is the full length of $I$ according to which $I$ is compared to the other intervals under rule (P2).
Taking a payphone at the open end of $I$ transforms it into a closed interval of length $\ell-1$.
Therefore, the contribution of $I$ to the intervals evolution equals
$S'(\ell) = S(\ell-1) \cup \{2\ell-1\}$, where
the elements of $S(\ell-1)$ correspond to the intervals resulting from the evolution of the closed interval of length $\ell-1$.
Knowing the intervals evolution, we proceed similarly to the proof of Theorem~\ref{th:C2} to obtain the formula for $P_2(n)$.
\end{proof}

\begin{theorem}\label{th:P1}
Let $P_1(n)$ be the number of payphone permutations of type (P1) and size $n$.
Then for any $n\geq 1$,
$$P_1(n) = \sum_{i=1}^n G(S'(i-1)\cup S'(n-i)).$$
\end{theorem}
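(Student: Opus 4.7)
The plan is to combine the arguments of Theorems~\ref{th:P2} and~\ref{th:C1}. As in the proof of Theorem~\ref{th:P2}, I would sum over the position $i\in\{1,\dots,n\}$ chosen by the first person, which splits the row into two semi-closed intervals of lengths $i-1$ and $n-i$. By the same reasoning, the first person entering a semi-closed interval of length $\ell\geq 1$ must take its open end, reducing it to a closed interval of length $\ell-1$; hence the multiset of full lengths of all intervals that ever appear equals $M:=S'(i-1)\cup S'(n-i)$.

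It then remains to count the orderings and central-position choices consistent with rule (P1) for this evolution, for which I would mirror the proof of Theorem~\ref{th:C1} with $G$ in place of $F$. Since rule (P1) is indifferent between full lengths $2k-1$ and $2k$ (both offering maximum distance $k$), the intervals within each bucket $k$ may be addressed in any interleaved order, contributing $(\#_{2k}M+\#_{2k-1}M)!$ for each $k\geq 1$. Each closed interval of even length $2k$ with $k\geq 2$ further contributes a factor $2$ from the choice between its two central payphones, while semi-closed intervals, which all have odd full length $2\ell-1$, admit a unique first choice (their open end) and contribute no such factor.

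As in Theorem~\ref{th:C1}, the delicate bucket is $k=1$: each closed length-$2$ interval produces, upon being filled, a sandwiched length-$1$ child also recorded in $M$, and the ``parent before child'' constraint divides the naive $(\#_1M+\#_2M)!$ by $2^{\#_2M}$, precisely cancelling the central-choice factor $2^{\#_2M}$ of the length-$2$ intervals. The main step I expect to require care is checking that the semi-closed length-$1$ intervals (arising from $\{2\ell-1\}$ with $\ell=1$ in $S'(1)$) introduce no additional constraint, since they enlarge $\#_1M$ but have neither parent nor child; more generally, one should confirm that the pairing of a length-$2$ interval with its length-$1$ child is the only same-bucket parent-child relationship, because the children of any closed interval of length $m\geq 3$ and of any semi-closed interval of length $\ell\geq 2$ always land in a strictly smaller bucket. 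Once this is verified, the per-position contribution is exactly $G(M)$, and summing over $i$ yields the claimed formula.
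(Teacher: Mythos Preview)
Your proposal is correct and follows exactly the approach the paper indicates: combine the position-sum decomposition of Theorem~\ref{th:P2} with the bucket-counting function $G$ from Theorem~\ref{th:C1}. In fact, you have spelled out considerably more detail than the paper's own one-line proof, including the verification that the only same-bucket parent--child relation is the length-$2\to$length-$1$ one and that semi-closed intervals of length~$1$ add to $\#_1 M$ without introducing new ordering constraints.
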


\begin{proof}
The proof combines the ideas of the proofs of Theorems~\ref{th:C1} and \ref{th:P2}.
\end{proof}

\section{Other Types of Payphone Permutations}\label{sec:other}

We will consider some additional types of payphone permutations:
\begin{description}
\item[(P3)] A person follows rule (P1), except that
among the available payphones adjacent to an occupied one the most preferred are those located at the open ends (when available).
\item[(P4)] A person follows rule (P2), except that a payphone adjacent to a single occupied payphone is preferred to a sandwiched payphone.
\item[(P5)] A person follows rule (P1), except that a payphone adjacent to a single occupied payphone is preferred to a sandwiched payphone.
\end{description}

Compared to rules (P1) and (P2), rules (P3)---(P5) alter the treatment of the following intervals:
(i) a closed interval of length $1$ (a sandwiched payphone); (ii) a closed interval of length $2$; and (iii) a semi-closed interval of length $1$.
Each of such intervals is composed of payphones adjacent to at least one occupied payphone, which therefore are indistinguishable from the perspective of the distance to their closest occupied payphones.
While for (P1) all three cases have the same preference, (P3) favors (iii) over equally preferred (i) or (ii); and (P5) equally prefers (ii) and (iii) over (i).
Similarly, while rule (P2) prefers (ii) over each of equally preferred (i) and (iii), rule (P4) additionally imposes the preference of (iii) over (i).
We remark that an interval of type (i) 
can be created only by the first person, since any following people coming to a semi-closed interval will turn it into a closed one.

We enumerate the payphone permutations of types (P3), (P4), and (P5) in Theorems~\ref{th:P3}, \ref{th:P4}, and \ref{th:P5} below.

\begin{theorem}\label{th:P3}
Let $P_3(n)$ be the number of payphone permutations of type (P3) and size $n$.
Then $P_3(1) = 1$, $P_3(2) = 2$, $P_3(3) = 4$, and for any $n\geq 4$,
$$P_3(n) = 2G(S'(n-1)) + 2G(S'(n-2)) + \sum_{i=3}^{n-2} G(S'(i-1)\cup S'(n-i)).$$
\end{theorem}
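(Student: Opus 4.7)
The plan is to adapt the proof of Theorem~\ref{th:P1}, splitting the count by the first person's choice $i \in \{1, \ldots, n\}$ and re-examining only those cases where rule (P3) diverges from (P1). Rules (P1) and (P3) disagree only when, at some step, the maximum available distance equals $1$ and the options include both an open-end payphone (a semi-closed interval of length $1$) and a sandwiched payphone or a payphone of a closed interval of length $2$. Since any semi-closed interval is entered at its open end by its first visitor---after which it becomes a closed interval---no new semi-closed intervals arise after the opening move. A semi-closed interval of length $1$ is therefore present only when the first person chose $i \in \{2, n-1\}$; for $i \in \{1, n\}$ or $3 \leq i \leq n-2$ the (P3) count consequently equals the (P1) count, contributing $2\,G(S'(n-1)) + \sum_{i=3}^{n-2} G(S'(i-1)\cup S'(n-i))$ exactly as in Theorem~\ref{th:P1}.

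For the novel cases $i \in \{2, n-1\}$, the first person creates a semi-closed interval $J_0$ of length $1$ alongside a semi-closed interval of length $n-2$. I would first show that $J_0$ is chosen exactly when the global maximum available distance first drops to $1$: before that moment, the length-$(n-2)$ interval and the closed intervals obtained from its evolution offer payphones at distance $\geq 2$, which (P1)---and hence (P3)---strictly prefers over $J_0$. Once the maximum distance becomes $1$, rule (P3) forces $J_0$ to be taken before any sandwiched payphone or closed interval of length $2$; after $J_0$ is removed, all remaining length-$1$ and length-$2$ intervals are closed and are therefore handled identically by (P1) and (P3).

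Translating this constraint into the counting formula: with $M := S'(1) \cup S'(n-2) = \{1\} \cup S'(n-2)$, the (P1)-count for $i = 2$ is $G(M)$, whose factor $(\#_1 M + \#_2 M)!$ orders the length-$1$ and length-$2$ intervals freely. Forcing the distinguished element $1 \in M$ (corresponding to $J_0$) to come first divides this factor by $\#_1 M + \#_2 M$. For $n \geq 4$ we have $2n - 5 \geq 3$, so $\#_k M = \#_k S'(n-2)$ for $k \geq 2$ and $\#_1 M + \#_2 M = 1 + \#_1 S'(n-2) + \#_2 S'(n-2)$; substituting these into the definition of $G$ yields $G(M)/(\#_1 M + \#_2 M) = G(S'(n-2))$, so $i \in \{2, n-1\}$ jointly contribute $2\,G(S'(n-2))$. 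The base cases $n \in \{1, 2, 3\}$ are verified by direct enumeration. The most delicate step is the timing argument in the second paragraph, which rests on the fact that during the evolution of a closed interval the maximum available distance is monotonically non-increasing and stays at least $2$ until all remaining sub-intervals have length $1$ or $2$.
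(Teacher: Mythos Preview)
Your proof is correct and follows essentially the same approach as the paper: split by the first person's position $i$, observe that the only divergence from (P1) occurs when a semi-closed interval of length $1$ is present (namely $i\in\{2,n-1\}$), and argue that this interval is forced to be taken first among the length-$1$/length-$2$ intervals, so that the remaining count is $G(S'(n-2))$. Your write-up is in fact slightly more precise than the paper's---you correctly state that $J_0$ must precede both the closed length-$1$ and the closed length-$2$ intervals (the paper's wording mentions only ``length $1$''), and you make the algebraic identity $G(\{1\}\cup S'(n-2)) = (\#_1 M+\#_2 M)\cdot G(S'(n-2))$ explicit rather than asserting it.
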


\begin{proof}
Let $n\geq 4$.
Rule (P3) is different from (P1) only if there exists a semi-closed interval $I$ of length $1$. 
This interval can be created only by the first person, which occurs for $i=2$ and $i=n-1$ in the formula for $P_1(n)$.
The terms corresponding to the other values of $i$ are inherited by the formula for $P_3(n)$ as is.

The terms corresponding to $i=2$ and $i=n-1$ in the formula for $P_1(n)$ both equal $G(S'(1)\cup S'(n-2))=G(\{1\}\cup S'(n-2))$,
which accounts for the contribution of $I$ as having the same preference as any other interval of length $1$.
However, under rule (P3), the interval $I$ must be selected first among the intervals of length $1$,
and thus the number of selection orders of such intervals does not depend on $I$. 
That is, the terms corresponding to $i=2$ and $i=n-1$ in the formula for $P_3(n)$ become simply $G(S'(n-2))$.
\end{proof}

\begin{theorem}\label{th:P4}
Let $P_4(n)$ be the number of payphone permutations of type (P4) and size $n$.
Then $P_4(1) = 1$, $P_4(2) = 2$, $P_4(3) = 4$, and for any $n\geq 4$,
$$P_4(n) = 2F(S'(n-1)) + 2F(S'(n-2)) + \sum_{i=3}^{n-2} F(S'(i-1)\cup S'(n-i)).$$
\end{theorem}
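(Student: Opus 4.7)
The plan is to adapt the argument from Theorem~\ref{th:P3} to rule~(P4), which modifies rule~(P2) in the same way that (P3) modifies (P1). Rule~(P4) differs from rule~(P2) only when a semi-closed interval of length~$1$ coexists with sandwiched payphones, i.e.\ with closed intervals of length~$1$. As noted in the preamble to this section, a semi-closed interval of length~$1$ can be created only by the very first person, specifically when they sit at position $i=2$ or $i=n-1$. I will therefore split the sum from Theorem~\ref{th:P2} according to the first person's position $i$, leave the generic terms alone, and replace only the two boundary terms where the priority clash between (i) and (iii) can arise.

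For $i\in\{1,n\}$ the first person sits at the very end, leaving a single semi-closed interval of length $n-1$; no semi-closed length-$1$ interval is ever created, so rules~(P2) and~(P4) agree and these two cases contribute $2F(S'(n-1))$. For $i\in\{3,\dots,n-2\}$ both resulting semi-closed intervals have length at least~$2$, so again the rules agree and the summands $F(S'(i-1)\cup S'(n-i))$ are inherited verbatim from Theorem~\ref{th:P2}. The only modified positions are $i=2$ and its mirror $i=n-1$, where the row splits into semi-closed intervals of lengths $1$ and $n-2$ with combined evolution multiset $S'(1)\cup S'(n-2) = \{1\}\cup S'(n-2)$.

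The key step is to show that each modified term equals $F(S'(n-2))$, giving the desired coefficient~$2$. Under rule~(P2) the semi-closed length-$1$ interval is indistinguishable from any sandwiched payphone arising inside $S'(n-2)$, since all of them have full length~$1$; this contributes the factor $(1+\#_1 S'(n-2))!$ inside $F(\{1\}\cup S'(n-2))$. Under rule~(P4) the semi-closed length-$1$ interval is strictly preferred over sandwiched payphones, so it is forced to be the first of the length-$1$ intervals ever selected, and the number of admissible orderings drops to $(\#_1 S'(n-2))!$. Since adjoining a single element of length~$1$ to the multiset neither affects any $(\#_k M)!$-factor for $k\neq 1$ nor changes the even-length count that controls the power of~$2$ in $F$, the identity
\[
F(\{1\}\cup S'(n-2)) = \bigl(1+\#_1 S'(n-2)\bigr)\cdot F(S'(n-2))
\]
holds, and dividing out the forced factor produces exactly $F(S'(n-2))$.

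The base cases $n\in\{1,2,3\}$ are checked directly: for $n\leq 3$ the configuration required for (P4) and (P2) to diverge (a semi-closed length-$1$ interval concurrent with a sandwiched payphone) cannot arise, so $P_4(n)=P_2(n)$, yielding $1,2,4$ by Theorem~\ref{th:P2}. I do not anticipate a real obstacle: the proof is structurally identical to that of Theorem~\ref{th:P3}, with $F$ in place of $G$ and with the single-length factor $(\#_1 M)!$ taking the role of the paired factor $(\#_2 M+\#_1 M)!$ that appears in (P1)/(P3); the only minor care needed is to verify that, in (P4), the semi-closed length-$1$ interval outranks only sandwiched payphones and not length-$2$ closed intervals, which is exactly what the definition of (P4) stipulates.
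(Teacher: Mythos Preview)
Your proposal is correct and follows exactly the approach the paper intends: it adapts the proof of Theorem~\ref{th:P3} by replacing $G$ with $F$ and the formula for $P_1(n)$ with that for $P_2(n)$, isolating the boundary cases $i=2$ and $i=n-1$ where the semi-closed length-$1$ interval appears and showing that forcing it ahead of the sandwiched payphones reduces the factor $(1+\#_1 S'(n-2))!$ to $(\#_1 S'(n-2))!$. The paper's own proof is a one-line reference to this analogy, and you have spelled out precisely the details it leaves implicit.
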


\begin{proof} 
The proof is similar to the proof of Theorem~\ref{th:P3} adjusting the formula for $P_1(n)$, but applies to the formula for $P_2(n)$.
\end{proof}

\begin{theorem}\label{th:P5}
Let $P_5(n)$ be the number of payphone permutations of type (P5) and size $n$.
Then $P_5(1) = 1$, $P_5(2) = 2$, $P_5(3) = 4$, and for any $n\geq 4$,
$$P_5(n) = 2H(S'(n-1)) + H(\{2\}\cup S'(n-2)) + \sum_{i=3}^{n-2} H(S'(i-1)\cup S'(n-i)),$$
where the function $H$ is defined on the finite multisets of positive integers as follows:
$$H(M) := \bigg(\prod_{k\geq 2} (\#_{2k} M + \#_{2k-1} M)!\bigg) \cdot (\#_2 M)!\cdot (\#_1 M)! \cdot 2^{\sum_{k\geq 1} \#_{2k} M}.$$
\end{theorem}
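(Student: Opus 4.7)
The proof follows the scheme of Theorems~\ref{th:P1} and~\ref{th:P3}: verify $P_5(1)$, $P_5(2)$, $P_5(3)$ by inspection, then for $n\geq 4$ decompose by the initial position $i$ chosen by the first person, which splits the row into two semi-closed intervals of lengths $i-1$ and $n-i$. The function $H$ plays the role of $G$ tailored to rule (P5): the product $\prod_{k\geq 2}(\#_{2k}M+\#_{2k-1}M)!$ captures the ordering freedom within each higher-priority class exactly as in $G$, and the exponential $2^{\sum_{k\geq 1}\#_{2k}M}$ records the middle-payphone choice in every even-length closed interval. What changes is the lowest tier: under (P5) every closed length-$2$ interval is strictly preferred over every closed length-$1$, so all length-$2$s must be processed first. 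Accordingly, the combined factor $(\#_2 M+\#_1 M)!$ of $G$ (under the weak constraint ``each length-$2$ precedes its own child length-$1$'') is replaced by the separated $(\#_2 M)!\cdot(\#_1 M)!$ under the global constraint ``all length-$2$s precede all length-$1$s,'' and the partial cancellation that shortened $G$'s exponent to $\sum_{k\geq 2}\#_{2k}M$ no longer occurs.

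For $3\leq i\leq n-2$ both semi-closed intervals have length $\geq 2$, so no type (iii) semi-closed length-$1$ interval ever arises; by the argument in the proof of Theorem~\ref{th:P2} the multiset of intervals that appear is $S'(i-1)\cup S'(n-i)$, giving the per-$i$ count $H(S'(i-1)\cup S'(n-i))$. The cases $i=1$ and $i=n$ each produce a single semi-closed interval of length $n-1$ (still without any type (iii) interval when $n\geq 4$) and together contribute $2H(S'(n-1))$.

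The main step, and the only genuinely new ingredient, is the symmetric pair $i=2$ and $i=n-1$, each of which creates a type (iii) semi-closed length-$1$ interval beside a semi-closed interval of length $n-2$. Under (P5) the type (iii) interval shares its priority class with the closed length-$2$ intervals, so I would treat it as a formal ``$2$'' and use the multiset $\{2\}\cup S'(n-2)$ in place of the naive $\{1\}\cup S'(n-2)$. Two subtleties demand care: the type (iii) interval (a) contains only a single payphone and therefore offers no genuine middle-payphone choice, and (b) does not spawn any further interval. The $\{2\}$-encoding handles both automatically: adjoining a formal $2$ increments $\#_2 M$ without touching $\#_1 M$, so no spurious downstream length-$1$ is introduced, while the extra factor of $2$ that $H$ gains through $2^{\#_2 M}$ for this pseudo-length-$2$ is precisely what is needed to combine the two-fold symmetry between $i=2$ and $i=n-1$. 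Thus $H(\{2\}\cup S'(n-2))$ already aggregates the counts from both symmetric starting positions, explaining the absence of an explicit prefactor of $2$ on this term in the theorem.
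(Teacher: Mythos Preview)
Your proof is correct and follows essentially the same approach as the paper's: derive $H$ from $G$ by separating the length-$1$ and length-$2$ factors (since under (P5) all length-$2$ intervals strictly precede all length-$1$ intervals, removing the cancellation that truncated $G$'s exponent), and handle the semi-closed length-$1$ interval at $i\in\{2,n-1\}$ by encoding it as a formal ``$2$''. Your packaging of the last point---letting the spurious factor of $2$ from the pseudo-length-$2$ absorb the two-fold symmetry between $i=2$ and $i=n-1$---is the same arithmetic as the paper's ``multiply by $2$ for the two positions, then divide by $2$ for the missing middle-payphone choice,'' just phrased from the other direction.
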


\begin{proof}
Let $n\geq 4$.
Rule (P5) is different from (P1) only when it comes to accounting for intervals of full length $1$ and $2$.
The formula for $P_5(n)$ is therefore obtained from the one for $P_1(n)$ by replacing the function $G$ with a new function $H$
having the same terms involving $\#_{2k} M$ and $\#_{2k-1} M$ for $k\geq 2$, but differing for $k=1$.
Namely, while the number of possible selection orders for the intervals of length $1$ and $2$ under rule (P1) equals $(\#_{2} M + \#_{1} M)! / 2^{\#_2 M}$ (as explained in the proof of Theorem~\ref{th:C1}),
under rule (P5) this number is simply $(\#_{2} M)!\cdot (\#_{1} M)!$ since the intervals of length $2$ must be selected before the intervals of length $1$.

We also need to take special care of the semi-closed interval of length $1$, which appears in the cases of $i=2$ and $i=n-2$. These cases together contribute $2G(S'(1)\cup S'(n-2)) = 2G(\{1\}\cup S'(n-2))$ to the formula for $P_1(n)$.
However, under rule (P5), such an interval has the same preference as any closed interval of length $2$, suggesting modification of the corresponding term to $2H(\{2\}\cup S'(n-2))$ in the formula for $P_5(n)$. It remains only to divide this term by $2$ since, unlike a closed interval of length $2$ providing two ways to break it, there is only one way for a semi-closed interval of length $1$.
\end{proof}

\section{Discussion}
The Online Encyclopedia of Integer Sequences~\cite{OEIS}
contains sequences formed by the numbers of payphone permutations of each type.
The following table provides the corresponding indices and the first ten terms of each sequence.

\begin{center}
\begin{tabular}{c|c|cccccccccc}
OEIS index & $n$  &  1 & 2 & 3 & 4 & 5 & 6 & 7 & 8 & 9 & 10 \\
\hline\hline
\texttt{A358056} & $P_1(n)$ & 1 & 2 & 4 & 8 & 20 & 48 & 216 & 576 & 1392 & 7200 \\
\hline
\texttt{A095236} & $P_2(n)$ & 1 & 2 & 4 & 8 & 16 & 36 & 136 & 216 & 672 & 2592 \\
\hline
\texttt{A361295} & $P_3(n)$ & 1 & 2 & 4 & 6 & 12 & 40 & 144 & 384 & 1008 & 6816 \\
\hline
\texttt{A095912} & $P_4(n)$ & 1 & 2 & 4 & 6 & 12 & 28 & 104 & 152 & 528 & 2208 \\
\hline
\texttt{A363785} & $P_5(n)$ & 1 & 2 & 4 & 6 & 16 & 28 & 120 & 264 & 576 & 2784 \\
\hline
\texttt{A361296} & $C_1(n)$ & 1 & 2 & 6 & 8 & 60 & 144 & 336 & 384 & 8640 & 57600\\
\hline
\texttt{A095239} & $C_2(n)$ & 1 & 2 & 6 & 8 & 40 & 96 & 168 & 384 & 1728 & 15360\\
\hline
\end{tabular}
\end{center}

The formulas established in Theorems~\ref{th:C2}--\ref{th:P5} can be used to compute hundreds of terms of these sequences.

Quite counterintuitively, the number of payphone permutations of type (C2) is not an increasing function:
we have $C_2(n) > C_2(n+1)$ for $n = 24, 32, 48, 56, 64, \dots$, and even $C_2(96) > C_2(97) > C_2(98)$.
Also, it is worth noticing that while $P_4(5) < P_5(5)$, we have $P_4(6)=P_5(6)=28$ and, moreover, they enumerate the same set of $28$ permutations.

We note that the function $f(n) := \#_1 S(n-1)$ satisfies the recurrence: $f(1)=0$, $f(2)=1$, and $f(n) = f(\Floor{\tfrac{n}2}) + f(\Ceil{\tfrac{n}2})$ for $n\geq 3$. It is present in the OEIS as the sequence {\tt A060973}, among many similar \emph{divide-and-conquer} recurrence sequences~\cite{Stephan2004}.
The explicit formula given in Theorem~\ref{th:kSn} makes $f(n)$ a nice test case for the asymptotic analysis developed in~\cite{Hwang2017}.

\section*{Acknowledgments}

The author thanks Eli Bagno and anonymous reviewers for providing valuable comments on an earlier version of this manuscript.

\bibliographystyle{plain}
\bibliography{payphone}

\end{document}